\definecolor{vegasgold}{rgb}{0.77, 0.7, 0.35}
\definecolor{darkgoldenrod}{rgb}{0.72, 0.53, 0.04}
\definecolor{gold(metallic)}{rgb}{0.83, 0.69, 0.22}
\DeclareFontFamily{U}{wncy}{}
\DeclareFontShape{U}{wncy}{m}{n}{<->wncyr10}{}
\DeclareSymbolFont{mcy}{U}{wncy}{m}{n}
\DeclareMathSymbol{\Sh}{\mathord}{mcy}{"58}
\newtheorem{theorem}{Theorem}[section]
\newtheorem{lemma}[theorem]{Lemma}
\newtheorem*{theorem*}{Theorem}
\newtheorem*{corollary*}{Corollary}
\numberwithin{equation}{section}
\theoremstyle{remark}
\newtheorem{remark}[theorem]{Remark}
\newcommand{\Z}{\mathbb{Z}}
\newcommand{\Q}{\mathbb{Q}}
\newcommand{\F}{\mathbb{F}}
\newcommand{\cO}{\mathcal{O}}
\newcommand{\op}[1]{\operatorname{#1}}
\begin{document}
\title[Catalan's equation over function fields]{Remarks on Catalan's equation over function fields}

\author[A.~Ray]{Anwesh Ray}
\address[A.~Ray]{Centre de recherches mathématiques,
Université de Montréal,
Pavillon André-Aisenstadt,
2920 Chemin de la tour,
Montréal (Québec) H3T 1J4, Canada}
\email{anwesh.ray@umontreal.ca}

\begin{abstract}
 Let $\ell$ be a prime number, $F$ be a global function field of characteristic $\ell$. Assume that there is a prime $P_\infty$ of degree $1$. Let $\cO_F$ be the ring of functions in $F$ with no poles outside of $\{P_\infty\}$. We study solutions to Catalan's equation $X^m-Y^n=1$ over $\cO_F$ and show that under certain additional conditions, there are no non-constant solutions which lie in $\cO_F$, when $m,n>1$.
\end{abstract}

\subjclass[2020]{11D41, 11R58}
\keywords{Catalan's equation, Catalan's conjecture, function field arithmetic, diophantine equations over global function fields, Picard groups of projective curves.}

\maketitle
\section{Introduction}
Let $m>1$ and $n>1$ be integers, and consider the diophantine equation
\[X^m-Y^n=1.\]The famous Catalan conjecture states that there are no non-trivial integer solutions to the above equation except when $m=2$, $n=3$ and $(X,Y)=(\pm 3, 2)$. The celebrated result of Mih$\breve{\text{a}}$ilescu resolves this conjecture using techniques from the theory of cyclotomic fields (cf. \cite{mihailescu2004primary}). Given the close analogy between number fields and function fields, it is of interest to study analogues of Catalan's conjecture in characteristic $\ell>0$. The field of rational numbers $\Q$ is the simplest number field to consider, and analogously, the most natural analogue is the field of rational functions $\F(T)$, where $T$ is a formal variable, and $\F$ is a finite field. The ring of integers $\Z$ is thus analogous to the ring of polynomial functions $\F[T]$, which shares similar properties to $\Z$. The reader is referred to \cite{rosen2002number,goldschmidt2006algebraic} for an introduction to the arithmetic of function fields, and further perspectives elaborating the close analogy between number fields and their counterparts in positive characteristic. 

\par Let $\ell$ be a prime number and $F$ be a global function field of characteristic $\ell$. Denote by $\F_\ell$ the finite field with $\ell$ elements and set $\kappa$ to denote the algebraic closure of $\F_\ell$ in $F$. Note that $\kappa$ is a finite field (by assumption). Recall (from \cite[Chapter 5]{rosen2002number}) that a \emph{prime} in $F$ is defined to be the maximal ideal $v$ of a discrete valuation ring $R$ contained in $F$, with fraction field equal to $F$. The \emph{degree} of $v$ is defined to be the dimension of $R/v$ over the field of constants $\kappa$. Each prime $v$ comes equipped with a valuation $\op{ord}_v:F \rightarrow \Z\cup \{\infty\}$. Assume that there exists a prime $P_\infty$ of $F$ which has degree $1$, and let $\cO_F$ be the ring of functions in $F$ with no poles outside $\{P_\infty\}$. The point $P_\infty$ is referred to as the \emph{point at infinity} and $\cO_F$ is the \emph{ring of integers} of $F$. We say that a solution $(X,Y)\in \cO_F^2$ to $X^m-Y^n=1$ is \emph{constant} if $X$ and $Y$ are both contained in $\kappa$, and \emph{non-constant} otherwise.
\par Recall from \emph{loc. cit.} that a divisor is a finite integral linear combination of primes of $F$. The principal divisor associated to $g\in F$ is denoted $\op{div}(g)$, and two divisors $D_1$ and $D_2$ are said to be equivalent if $D_1-D_2$ is a principal divisor. The group of divisors classes of degree $0$ is finite (cf. \cite[Lemma 5.6]{rosen2002number}), and its cardinality is the \emph{class number of $F$}, and this quantity is denoted by $h_F$. Given a prime number $p\neq \ell$, let $F(\mu_p)$ be the function field obtained by adjoining the $p$-th roots of unity $\mu_p$ to $F$. Note that $F(\mu_p)=\kappa'\cdot F$, where $\kappa'=\kappa(\mu_p)$. Thus, $F(\mu_p)$ is a constant field extension of $F$ in the sense of \cite[Chapter 8]{rosen2002number}.
\begin{theorem}\label{main}
    Let $F$ be a global function field of characteristic $\ell>0$. Let $p$ and $q$ be prime numbers and assume that all the following conditions are satisfied
    \begin{enumerate}
        \item\label{c1 of main} $p\neq \ell$ and $q\neq \ell$, 
        \item\label{c2 of main} if $p\neq q$, then either $q\nmid h_{F(\mu_p)}$ or $p\nmid h_{F(\mu_q)}$.
        \item\label{c3 of main} if $q=2$, $p\neq 2$ and $q\mid h_{F(\mu_p)}$, then $p\nmid h_{F(\mu_4)}$.
    \end{enumerate} Then, there are no non-constant solutions to $X^p-Y^q=1$ in $\cO_F$. More generally, if $m>1$ and $n>1$ are integers such that $m$ is divisible by a prime $p$ and $n$ by a prime $q$ for which the above conditions are satisfied, then there are no non-constant solutions to $X^m-Y^n=1$ in $\cO_F$.
\end{theorem}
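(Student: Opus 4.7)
The plan is to reduce to prime exponents $m = p$, $n = q$ and then exploit a key structural feature: since $P_\infty$ has degree $1$, every constant-field extension $L = \kappa' \cdot F$ of $F$ has a unique prime $P_\infty'$ of degree $1$ above $P_\infty$, so $\cO_L^\times = \kappa'^\times$ and $\op{Pic}(\cO_L) \cong \op{Pic}^0(C_L)$ has order $h_L$. The reduction to prime exponents is immediate: if $(X, Y)$ is a non-constant solution to $X^m - Y^n = 1$, then $(X^{m/p}, Y^{n/q})$ is a non-constant solution to $U^p - V^q = 1$.

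For the diagonal case $p = q$ (which needs only condition (1)), I would factor
\[
X^p - Y^p = \prod_{i=0}^{p-1}(X - \zeta_p^i Y) = 1
\]
in $\cO_L$ with $L = F(\mu_p)$. Each factor is then a unit of $\cO_L$, hence a constant in $\kappa(\mu_p)$. Linear algebra on any two of the resulting equations yields $X, Y \in \kappa(\mu_p) \cap F = \kappa$, contradicting non-constancy.

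For $p \neq q$ under the hypothesis $q \nmid h_{F(\mu_p)}$, I would factor $Y^q = X^p - 1 = \prod_{i=0}^{p-1}(X - \zeta_p^i)$ in $\cO_L$ with $L = F(\mu_p)$. A common prime divisor of $(X - \zeta_p^i)$ and $(X - \zeta_p^j)$ for $i \neq j$ would divide $\zeta_p^i - \zeta_p^j$, hence $p$, but $p \in \kappa^\times \subseteq \cO_L^\times$. Therefore the ideals $(X - \zeta_p^i)$ are pairwise coprime, and each equals $J_i^q$ for a fractional ideal $J_i$ whose class is $q$-torsion in $\op{Pic}(\cO_L)$. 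Since $|\op{Pic}(\cO_L)| = h_{F(\mu_p)}$ is coprime to $q$, each $J_i$ is principal, yielding $X - \zeta_p^i = \eta_i \alpha_i^q$ with $\eta_i \in \kappa(\mu_p)^\times$ and $\alpha_i \in \cO_L$. Subtracting the equations for $i = 0, 1$ gives
\[
\eta_0 \alpha_0^q - \eta_1 \alpha_1^q = 1 - \zeta_p \in \kappa(\mu_p)^\times.
\]
Now I pass to a finite constant-field extension $L' = \kappa'' \cdot F$ containing $\mu_q$ and $q$-th roots of $\eta_0$ and $\eta_1$, so that the equation rewrites as $A^q - B^q = c$ with $c \in \kappa''^\times$ and $A, B \in \cO_{L'}$. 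Factoring $\prod_j (A - \zeta_q^j B) = c$ in $\cO_{L'}$: each factor is a unit, hence a constant (applying the structural feature to $L'$, which is still a constant-field extension of $F$), and solving for two distinct values of $j$ forces $A$ and $B$ to be constants. Therefore $\alpha_0$ is a constant, so $X - 1 = \eta_0 \alpha_0^q$ is a constant, contradicting the non-constancy of $X$.

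The symmetric case $p \nmid h_{F(\mu_q)}$ with $q$ odd runs the same argument on the factorization $Y^q + 1 = \prod_k(Y + \zeta_q^k)$ in $\cO_{F(\mu_q)}$. The remaining case $q = 2$ invokes condition (3): using $p \nmid h_{F(\mu_4)}$ and the factorization $(Y - i)(Y + i) = X^p$ in $\cO_{F(\mu_4)}$, one descends to $Y + i = \eta(u + vi)^p$, and subtracting the complex-conjugate equation gives $\eta(u + vi)^p - \bar\eta(u - vi)^p = 2i$, an equation of the form $A^p - B^p = c$ with $c$ a nonzero constant, to which the same ``factor-a-constant'' argument applies. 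The main obstacle I anticipate is verifying that each auxiliary extension remains a constant-field extension of $F$, so that the ``single degree-$1$ prime at infinity'' structure (and the ensuing identification $\cO_L^\times = \kappa'^\times$) persists at every stage; this is what makes the concluding factorization trick work uniformly across all cases.
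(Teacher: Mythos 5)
Your proposal is correct, and its overall strategy coincides with the paper's: factor the equation over the cyclotomic constant-field extension, use coprimality of the factors together with the class-number hypothesis to write each factor as a unit times a $q$-th power, and reduce to an equation $A^q-B^q=c$ with $c$ a nonzero constant; the case analysis (diagonal case, $q\nmid h_{F(\mu_p)}$, the symmetric case, and the $q=2$ case via $h_{F(\mu_4)}$) matches as well. The differences are worth recording. First, you work with ideals of $\cO_L$ and with $\op{Pic}(\cO_L)\cong \op{Pic}^0$, justified by the degree-$1$ place $P_\infty$ remaining inert of degree $1$ in every constant-field extension; the paper instead works with degree-$0$ divisors over $F\cdot\bar{\kappa}$. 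Your coprimality claim for the ideals $(X-\zeta^i)$ is cleaner than the paper's assertion that the divisors $\op{div}(X-\zeta^i)$ have disjoint supports, which is literally true only of their zero parts (all share the pole at $P_\infty$), so your route sidesteps the small extra step (using $\gcd(p,q)=1$ at infinity) that the divisor version implicitly requires; your parenthetical ``hence divides $p$'' is a number-field reflex and unnecessary, since $\zeta^i-\zeta^j$ is already a nonzero constant, hence a unit. Second, to conclude from $A^q-B^q=c$, the paper notes that $A-B$ divides the constant $c$ and then invokes its Lemma \ref{lemma 2}, that $(z+c_1)^q-z^q-c$ is a nonzero polynomial over $\bar{\kappa}$ (using $q\neq\ell$ so the coefficient $qc_1$ of $z^{q-1}$ survives); you instead run the factor-into-units trick a second time over a further constant-field extension containing $\mu_q$ and $q$-th roots of the unit coefficients, using $q\neq\ell$ through $\zeta_q\neq 1$. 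Both are valid; yours is more uniform, the paper's avoids a second base change. Finally, your direct factorization of $Y^q+1$ in the symmetric case replaces the paper's substitution $(X,Y)\mapsto(-Y,-X)$, and your explicit reduction from general exponents $m,n$ to the prime case (noting that constancy of $X^{m/p}$ forces constancy of $X$ since $\kappa$ is algebraically closed in $F$) fills in a step the paper leaves implicit.
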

 The condition requiring that $p$ and $q$ are distinct from $\ell$ is necessary, since if $m=\ell$ for instance, it is easy to construct a large class of non-constant solutions if one of the primes is equal to $\ell$ (cf. Remark \ref{remark} for details). 

\par We mention some related work of relevance. Silverman \cite{silverman1982catalan} considered a general class of equations of the form $a X^m+bY^n=c$ over a general function field $K$, and proved that under some further conditions, there are only finitely many solutions when $a,b,c\in K^*$ are fixed. There is a mistake in the statement of Silverman's result, which has been corrected by Koymans \cite{koymans2022generalized}. The result of Koymans moreover applies to fields of larger dimension. The Catalan equation was studied by Nathanson \cite{nathanson1974catalan} over $K[T]$ and $K(T)$ where $K$ is a field of positive characteristic. It is shown in \emph{loc. cit.} that if $m>1$ and $n>1$ are coprime to $\ell$ then there are no solutions to Catalan's equation $X^m-Y^n=1$ that lie in $K[T]$ but not in $K$. Specializing to the case when $K$ is a finite field, one obtains the conclusion of Theorem \ref{main} for the rational function field. This is because the class number of any rational function field is equal to $0$. Theorem \ref{main} can thus be viewed as a generalization of Nathanson's result to general function fields $F$ with added stipulations on $(m,n)$.

\subsection{Acknowledgment:} The author thanks Peter Koymans for a helpful suggestion. The author is supported by the CRM-Simons postdoctoral fellowship. He thanks the referees for their helpful reports.

\section{Proof of the main result}
\par Recall that $F$ is a global function field of characteristic $\ell>0$ with field of constants $\kappa$. Let $\bar{\kappa}$ be the algebraic closure of $\kappa$ in a fixed algebraic closure of $F$, and set $F'$ to denote the composite $F\cdot \bar{\kappa}$. Also, denote by $A$ the composite $\cO_F\cdot \bar{\kappa}$. The field $F'$ is identified with the function field of a projective curve $\mathfrak{X}$ over $\bar{\kappa}$ and each point in $\mathfrak{X}(\bar{\kappa})$ corresponds to a valuation ring $R\subset F'$ with residue field $\bar{\kappa}$ and fraction field $F'$. The valuation ring associated to $w\in \mathfrak{X}(\bar{\kappa})$ is denoted $\cO_w$, and we refer to $w$ as a \emph{prime of $F'$}. We say that $w$ divides (or lies above) a prime $v$ of $F$ if there is a natural inclusion of valuation rings $\cO_v\hookrightarrow \cO_w$ induced by the inclusion $F\hookrightarrow F'$. Note that since $P_\infty$ has degree $1$, it is totally inert in $F'$. In particular, there is a single prime of $F'$ that lies above $P_\infty$, which we identify with $P_\infty$. Given any prime $v$ of $F$, set $d_v$ to denote $-\op{ord}_v$ and for any function $g\in F$, we refer to $d_v(g)$ as the order of the pole of $g$ at $v$. Given a prime $w$ of $F'$ (i.e., point $w\in \mathfrak{X}(\bar{\kappa})$) and $g\in F'$, denote by $d_w(g)$ the order of the pole of $g$ at $w$. We set $d:A\rightarrow \Z_{\geq 0}$ to denote $d_{P_\infty}$.
\begin{lemma}\label{lemma 1}
Let $f,g\in A$ be non-zero. The following assertions hold.
\begin{enumerate}
    \item\label{c1} $d(g)=0$ if and only if $g$ is a constant function.
    \item\label{c2} We have that $d(fg)=d(f)+d(g)$.
    \item\label{c3} Suppose that $d(f)< d(g)$. Then, $d(g+f)=d(g)$.
    \item \label{c4}We have that $d(f)\geq 0$, and $d(f)>0$ if and only if $f$ is non-constant. 
\end{enumerate}
\end{lemma}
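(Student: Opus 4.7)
The plan is to view each $f \in A$ as a rational function on the complete smooth projective curve $\mathfrak{X}$ over $\bar{\kappa}$, and to use the fact that by construction of $A = \cO_F \cdot \bar{\kappa}$, such an $f$ has no poles at any prime $w$ of $F'$ other than possibly $P_\infty$. Throughout, the identities on $\text{ord}_{P_\infty}$ extend to $A$ because $\cO_{P_\infty}$ is a discrete valuation ring with fraction field $F'$ (here using that $P_\infty$ is totally inert in $F'/F$, as noted just before the lemma).

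I would treat parts \eqref{c2} and \eqref{c3} first, as they are immediate from the valuation axioms. Since $d = -\operatorname{ord}_{P_\infty}$ and $\operatorname{ord}_{P_\infty}$ is a discrete valuation on $F'$, the additivity $\operatorname{ord}_{P_\infty}(fg) = \operatorname{ord}_{P_\infty}(f) + \operatorname{ord}_{P_\infty}(g)$ gives \eqref{c2}, and the ultrametric strict inequality $\operatorname{ord}_{P_\infty}(f+g) = \min(\operatorname{ord}_{P_\infty}(f), \operatorname{ord}_{P_\infty}(g))$ when $\operatorname{ord}_{P_\infty}(f) \neq \operatorname{ord}_{P_\infty}(g)$ gives \eqref{c3}, after translating minima of orders into maxima of $d$-values.

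The heart of the lemma is \eqref{c4}, specifically the nonnegativity $d(f) \geq 0$. The plan is to invoke the fact that the divisor $\op{div}(f)$ of a nonzero rational function $f \in F'^\times$ on the projective curve $\mathfrak{X}$ has degree zero. Since $f \in A$, the only prime of $F'$ at which $f$ can have a pole is $P_\infty$; hence
\[
0 = \deg(\op{div}(f)) = \operatorname{ord}_{P_\infty}(f) + \sum_{w \neq P_\infty} \operatorname{ord}_w(f),
\]
and the sum over $w \neq P_\infty$ is $\geq 0$ because $f$ has no poles at these primes. Therefore $\operatorname{ord}_{P_\infty}(f) \leq 0$, i.e.\ $d(f) \geq 0$. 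This is the step I expect to require the most care, as it is the only place where the global geometry of $\mathfrak{X}$ enters rather than a purely local computation at $P_\infty$.

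Parts \eqref{c1} and the second half of \eqref{c4} then follow in one breath. If $f$ is a nonzero constant, then $\op{div}(f) = 0$ so $d(f) = 0$. Conversely, if $d(f) = 0$, then by the display above combined with $\operatorname{ord}_w(f) \geq 0$ for $w \neq P_\infty$, one gets $\operatorname{ord}_w(f) = 0$ for every prime $w$ of $F'$; hence $f$ is a rational function on $\mathfrak{X}$ with neither zeros nor poles, and so by properness of $\mathfrak{X}$ over $\bar{\kappa}$ it is a constant in $\bar{\kappa}$. This gives \eqref{c1}, and combining it with the nonnegativity in \eqref{c4} yields that $d(f) > 0$ precisely when $f$ is non-constant.
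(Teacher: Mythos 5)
Your proof is correct and in substance matches the paper's: both arguments reduce everything to the observation that an element of $A$ can have a pole only at $P_\infty$, combined with standard global facts about rational functions on the projective curve $\mathfrak{X}$ (the paper omits parts (1)--(3) as routine and only writes out part (4)). The one point of divergence is how the inequality $d(f)\geq 0$ is obtained: the paper gets it in one stroke from the fact that a non-constant function on a projective curve must have a pole somewhere --- hence, for $f\in A$, at $P_\infty$ --- which simultaneously yields $d(f)>0$ for non-constant $f$ and part (1); you instead derive the inequality from $\deg(\op{div}(f))=0$ and then invoke separately that a function with no zeros or poles is constant. Both global inputs are standard and the resulting proofs are equally valid; the paper's is marginally more economical, while yours isolates the nonnegativity $d(f)\geq 0$ as its own step and makes the role of the global geometry of $\mathfrak{X}$ more explicit.
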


\begin{proof}
The proof of parts \eqref{c1} to \eqref{c3} are easy, hence, omitted. For part \eqref{c4} note that a non-constant function $f\in A$ must have a pole at some point. By virtue of being contained in $A$, $f$ does not have poles outside $\{P_\infty\}$. Therefore, $f$ must have a pole at $P_\infty$, and thus, $d(f)>0$. On the other hand, if $f$ is constant, then $d(f)=0$. This proves part \eqref{c4}.
\end{proof}
\begin{lemma}\label{lemma 2}
Let $Y\in A$ and $c_1, c_2\in \bar{\kappa}$ be non-zero constants. If for some prime $p\neq \ell$ we have that 
\[(Y+c_1)^p-Y^p=c_2,\]
then $Y$ is a constant.
\end{lemma}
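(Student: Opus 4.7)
The plan is to expand the left-hand side using the binomial theorem and then read off the pole order at $P_\infty$ using Lemma \ref{lemma 1}. Since $c_1 \in \bar{\kappa}$ is a constant, we have
\[(Y+c_1)^p - Y^p = \sum_{k=0}^{p-1} \binom{p}{k} c_1^{p-k} Y^k.\]
The top term (in $k = p-1$) is $p\, c_1 Y^{p-1}$. Since $p \neq \ell$, the integer $p$ is invertible in $\bar{\kappa}$, and since $c_1 \neq 0$, the coefficient $p c_1$ is nonzero. In particular, this top term is nonzero whenever $Y$ is nonzero.

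Now I would argue by contradiction: suppose $Y$ is non-constant. Then $Y \neq 0$ and, by Lemma \ref{lemma 1}\eqref{c4}, we have $d(Y) > 0$. Using Lemma \ref{lemma 1}\eqref{c2}, each nonzero summand $f_k := \binom{p}{k} c_1^{p-k} Y^k$ satisfies $d(f_k) = k\, d(Y)$, so the distinct nonzero $f_k$ have strictly increasing pole orders, with a unique maximum $d(f_{p-1}) = (p-1) d(Y)$ attained by the top term. Iterating Lemma \ref{lemma 1}\eqref{c3} (combined with the straightforward observation that $d(g_1 + g_2) \leq \max(d(g_1), d(g_2))$, which follows from the same lemma), we conclude
\[d\bigl((Y+c_1)^p - Y^p\bigr) = d(f_{p-1}) = (p-1) d(Y) > 0.\]
On the other hand, the hypothesis forces this pole order to equal $d(c_2) = 0$ by Lemma \ref{lemma 1}\eqref{c1}, a contradiction. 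Hence $Y$ must be constant.

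There is no serious obstacle; the only subtle point is verifying that the leading coefficient $pc_1$ does not vanish in $\bar{\kappa}$, which is exactly where the hypothesis $p \neq \ell$ gets used. One should also be slightly careful that intermediate terms $\binom{p}{k} c_1^{p-k} Y^k$ for $0 < k < p-1$ could conceivably vanish in characteristic $\ell$ (since $\ell$ might divide some $\binom{p}{k}$), but this only removes terms from the sum and does not interfere with the argument: what matters is that the top term $f_{p-1}$ survives and dominates.
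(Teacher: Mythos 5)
Your proof is correct, but it takes a genuinely different route from the paper. The paper's argument is a one-liner: it sets $f(z):=(z+c_1)^p-z^p-c_2$, observes that this is a nonzero polynomial with coefficients in $\bar{\kappa}$ (nonzero precisely because the leading coefficient $pc_1$ survives when $p\neq\ell$), and concludes that any root $Y$ is algebraic over the algebraically closed field $\bar{\kappa}$, hence lies in $\bar{\kappa}$. Your argument instead runs a pole-order computation at $P_\infty$: the same leading term $pc_1Y^{p-1}$ forces $d\bigl((Y+c_1)^p-Y^p\bigr)=(p-1)d(Y)>0$ for non-constant $Y$, contradicting $d(c_2)=0$. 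Both proofs pivot on exactly the same key point --- that $pc_1\neq 0$ in characteristic $\ell$ --- and your handling of the possibly vanishing intermediate binomial coefficients is the right precaution. The trade-off is that the paper's algebraicity argument is shorter and applies to any $Y\in F'$ (it never uses that $Y$ has poles only at $P_\infty$), whereas yours genuinely uses $Y\in A$ through Lemma \ref{lemma 1}\eqref{c4}; on the other hand, your valuation-theoretic argument is self-contained given Lemma \ref{lemma 1} and is closer in spirit to how $d$ is used elsewhere in the main proof. One small remark: the general subadditivity $d(g_1+g_2)\leq\max(d(g_1),d(g_2))$ you invoke is not literally part \eqref{c3} of Lemma \ref{lemma 1} (which only treats the case of strict inequality), but since the nonzero terms $f_k$ all have distinct pole orders here, adding them in increasing order of $d$ and applying part \eqref{c3} at each step already suffices, so nothing is lost.
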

\begin{proof}
Setting $f(z):=(z+c_1)^p-z^p-c_2$, we find that $f(z)$ is a nonzero polynomial in $z$ with coefficients in $\bar{\kappa}$. Therefore, any solution $Y$ to the equation $f(Y)=0$ must also lie in $\bar{\kappa}$.
\end{proof}

\begin{proof}[Proof of Theorem \ref{main}]
 First consider the case when $p=q$. Note that it is assumed that $p\neq \ell$. We show that there are no non-constant solutions to 
\[X^p-Y^p=1\] in $A$.
Note that $(X-Y)$ divides $X^p-Y^p=1$, hence by Lemma \ref{lemma 1}, \[d(X-Y)=d(1)-d\left(X^{p-1}+X^{p-2}Y+\dots+XY^{p-2}+Y^{p-1}\right)\leq d(1)=0.\] It follows from Lemma \ref{lemma 1} part \eqref{c1} that $(X-Y)$ is a constant $c\in \bar{\kappa}$. We thus deduced that
\begin{equation}\label{eq 2}(Y+c)^p-Y^p=1.\end{equation}

Lemma \ref{lemma 2} implies that \eqref{eq 2} has no non-constant solutions. Since $Y$ is a constant, it follows that $X$ is as well. If $X$ and $Y$ are in $\cO_F$, it follows therefore that $X,Y\in \kappa$.

\par We assume therefore that $p$ and $q$ are distinct (and distinct from $\ell$). Note that there are further conditions on $p$ and $q$. First, we consider the case when $q\nmid h_{F(\mu_p)}$. All the variables introduced in the following argument will be contained in $F(\mu_p)$. Let $\zeta$ be a primitive $p$-th root of $1$. Since it is assumed that $p\neq \ell$, we note that $\zeta\neq 1$. In what follows we consider divisors over $F(\mu_p)$. Given a divisor $D=\sum_v n_v v$ involving primes $v$ of $F(\mu_p)$, the support consists of all primes $v$ such that the coefficient $n_v$ is not equal to $0$. Factor $X^p-1$ into linear factors to obtain the following equation
\begin{equation}\label{eq 3}
    Y^q=\prod_{j=0}^{p-1} (X-\zeta^j).
\end{equation}
For $i\neq j$, note that $(X-\zeta^i)-(X-\zeta^j)=\zeta^j-\zeta^i$, which is a non-zero element of $\kappa(\mu_p)$. Hence, it follows that $\op{div}(X-\zeta^i)$ and $\op{div}(X-\zeta^j)$ have disjoint supports for $i\neq j$. From \eqref{eq 3}, we have the following relation between divisors that are formal linear combinations of primes in $F(\mu_p)$
\[\sum_{j=0}^{p-1}\op{div}(X-\zeta^j)=q\op{div}(Y).\] The elements $(X-\zeta^j)$ are all contained in $F(\mu_p)$, while $Y$ is contained in $F$. Since the divisors $\op{div}(X-\zeta^j)$ have disjoint supports for $i\neq j$, it follows that for each $i$, there is a divisor $D_i$ (involving linear combinations of primes in $F(\mu_p)$) such that $\op{div}(X-\zeta^i)=q D_i$. Since $\op{div}(X-\zeta^i)$ is a principal divisor, it has degree $0$, and hence $D_i$ does also have degree zero. Since $q\nmid h_{F(\mu_p)}$, there is no non-trivial $q$ torsion in the divisor class group. As a result, $D_i$ is a principal divisor $\op{div}(\alpha_i)$, where $\alpha_i\in F(\mu_p)$. Thus, we have deduced that for all $i$, \[X-\zeta^i=u_i\alpha_i^q,\] where $u_i\in F(\mu_p)$ is a non-zero function for which $\op{div}(u_i)=0$. Therefore $u_i$ is a unit, and consequently, is contained in $\kappa(\mu_p)$. Recall that $p$ and $q$ are distinct, and we have shown that $u_i\in \bar{\kappa}$. It follows that $u_i$ is the $q$-th power of an element $v_i\in\bar{\kappa}^\times$. Replacing $\alpha_i$ with $v_i\alpha_i$, we write \[X-\zeta^i=\alpha_i^q,\]where $\alpha_i\in (F')^\times$. Note that $\alpha_i$ is contained in $A$ since it has no poles outside $\{P_\infty\}$ (since $X-\zeta^i$ does not). We deduce that
\begin{equation}\label{eq 5}\alpha_0^q-\alpha_1^q=(X-1)-(X-\zeta)=\zeta-1.\end{equation}It follows that $\alpha_0-\alpha_1$ divides $\zeta-1$, hence has no zeros or poles. As a result, $\alpha_0-\alpha_1$ is a constant $c\in \bar{\kappa}$. It is clear from \eqref{eq 5} that $c$ is non-zero. Thus we find that 
\[(\alpha_1+c)^q-\alpha_1^q=\zeta-1.\]
Lemma \ref{lemma 2} then implies that $\alpha_1$ and $\alpha_0$ are constants. We have thus shown that $X$, and hence $Y$ are both elements in $\bar{\kappa}$. Since $\kappa$ is the algebraic closure of $\F_\ell$ in $F$, and both $X$ and $Y$ are contained in $F$, it follows that $X,Y\in \kappa$.
\par It follows from the condition \eqref{c2 of main} of Theorem \ref{main} that if $p\neq q$, then $q\nmid h_{F(\mu_p)}$ or $p\nmid h_{F(\mu_q)}$. We have shown that there are no non-constant solutions when $p=q$, or when $q\nmid h_{F(\mu_p)}$. Throughout the rest of this proof, we shall therefore assume that $p\nmid h_{F(\mu_q)}$. If both $p$ and $q$ are odd, then we may replace $X$ with $-Y$ and $Y$ with $-X$ to obtain the equation $X^q-Y^p=1$, and thus the previous argument that gives the result applies in this case. We have therefore dealt with the case when both $p$ and $q$ are odd, and we are left to consider the case when $p\nmid h_{F(\mu_q)}$ and either $p$ or $q$ is $2$.

\par First consider the case when $p=2$. It has been shown that there no non-constant solutions when $p=q$ and therefore $q$ must be odd. Moreover, as stated in the previous paragraph, we assume that $2\nmid h_{F(\mu_q)}$. Then, we find that $X^2=Y^q+1=Y^q-(-1)^q=\prod_j (Y+\zeta^j)$, where $\zeta$ is a primitive $q$-th root of unity. For $i\neq j$, note that $(Y+\zeta^i)-(Y+\zeta^j)=\zeta^i-\zeta^j$ is a constant, and therefore, $\op{div}(Y+\zeta^i)$ and $\op{div}(Y+\zeta^j)$ have disjoint supports for $i\neq j$. We thus arrive at the equation
\[\sum_{j=0}^{q-1}\op{div}(Y+\zeta^j)=2\op{div}(X).\] The divisors $\op{div}(Y+\zeta^j)$ have disjoint supports for $i\neq j$, and therefore, we may write $\op{div}(Y+\zeta^j)=2 D_j$ for some divisors $D_j$ that are defined over $F(\mu_q)$. Recall that $2\nmid h_{F(\mu_q)}$. An identical argument to the previous case implies that for all $j$, we have that\[Y+\zeta^j=\beta_j^2,\]where $\beta_j\in A$. We deduce that
\[\beta_0^2-\beta_1^2=(Y+1)-(Y+\zeta)=1-\zeta.\]It follows that $\beta_0-\beta_1$ divides $1-\zeta$. Therefore, $\beta_0-\beta_1$ has no zeros or poles, and hence equals a constant $c\in \bar{\kappa}$. Thus we find that 
\[(\beta_1+c)^2-\beta_1^2=1-\zeta.\]
Lemma \ref{lemma 2} then implies that $\beta_1=Y+\zeta$ and $\beta_0=Y+1$ are constants. From this, we deduce that both $X$ and $Y$ are constants.

\par Finally, assume that $p$ is odd, $q=2$. Note that the result has been proved when $q\nmid h_{F(\mu_p)}$. Therefore, we assume that $q\mid h_{F(\mu_p)}$. It follows from the condition \eqref{c3 of main} of Theorem \ref{main} that $p\nmid h_{F(\mu_4)}$. We consider the equation $X^p=Y^2+1=(Y+\eta)(Y-\eta)$, where $\eta^2=-1$. Note that $F(\mu_4)=F(\eta)$. Since $p$ does not divide the class number of $F(\eta)$, we find that $Y+\eta=\alpha_0^p$ and $Y-\eta=\alpha_1^p$, where $\alpha_0, \alpha_1$ are elements in $A$. Therefore, $2\eta=\alpha_0^p-\alpha_1^p$. In particular, this implies that $(\alpha_0-\alpha_1)$ is a constant $c$. Since $\eta\neq 0$, it follows that $c\neq 0$. We have the following equation
\[(\alpha_1+c)^p-\alpha_1^p=2\eta.\] The result follows from Lemma \ref{lemma 2}.
\end{proof}

\begin{remark}\label{remark}

At this point, it is pertinent to make a few remarks.
\begin{itemize}
    \item The assumptions that $p$ and $q$ are not equal to $\ell$ are necessary. Indeed, suppose that $p=\ell$. Then, setting $X=1+z^q$ and $Y=z^p$ for any element $z\in \cO_F$, one would obtain non-constant solutions.
    \item The methods introduced in this paper could potentially be applied to a more general class of diophantine equations, namely, equations of the form $X^m=f(Y)$, where $f(Y)\in \kappa[Y]$, where $\kappa$ is the field of constants of $F$. 
\end{itemize}
\end{remark}
\bibliographystyle{unsrt}
\bibliography{references}
\end{document}